\documentclass{amsart}
\usepackage{latexsym,amssymb,amsmath,amsthm,amscd}
\numberwithin{equation}{section}

\newtheorem{theorem}{Theorem}[section]
\newtheorem{lemma}[theorem]{Lemma}

\newtheorem{proposition}[theorem]{Proposition}

\theoremstyle{definition}

\theoremstyle{remark}

\begin{document}

\title{An observation of the subspaces of ${\mathcal S}'$}

\author{Yoshihiro Sawano}

\keywords{Lizorkin distributions, topologial dual, homogeneous Besov spaces}

\maketitle

\address{%
Department of Mathematics and Information Science, 
Tokyo Metropolitan University, 
1-1 Minami-Ohsawa, Hachioji, Tokyo 192-0397, Japan.}%

\begin{abstract}
The spaces
${\mathcal S}'/{\mathcal P}$
equipped with the quotient topology
and
${\mathcal S}'_\infty$
equipped with the weak-* topology
are known to be homeomorphic,
where ${\mathcal P}$ denotes the set of
all polynomials.
The proof is a combination 
of the fact in the textbook
by Treves and the well-known bipolar theorem.
In this paper by extending slightly the idea
employed in \cite{NNS15},
we give an alternative proof of this fact
and then we extend this proposition
so that we can include some related function spaces.
\end{abstract}

{\bf 2010 Classification:46A04, 46A20, 46A22, 42B35}

\section{Introduction}

It is useful to consider the quotient spaces
of ${\mathcal S}'$ or ${\mathcal D}'$
when we consider the homogeneous function spaces.
Usually such a quotient space can be identified
with some dual spaces as the following theorem shows:
\begin{theorem}\label{thm:main}
Let $X$ be a locally convex $($Hausdorff$)$ space
whose topology is given by a family of semi-norms
$\{p_\lambda\}_{\lambda \in \Lambda}$.
Equip 
$X^*$ with the weak-$*$ topology.
Let $V$ be a closed subspace
of $X^*$.
Define the orthogonal space $X_V$ to $V$ by:
\[
X_V \equiv \bigcap_{x^* \in V}\ker(x^*)
\]
and 
equip $X_V$ with the topology induced
by $X$.
Then the topological dual
$X_V^*$ is isomorphic to $X^*/V$
equiped with the quotient topology.
\end{theorem}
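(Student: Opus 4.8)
My plan is to realise the asserted isomorphism concretely as the restriction map, the dual $X_V^*$ being understood with its weak-$*$ topology $\sigma(X_V^*,X_V)$, as in the abstract's motivating example $\mathcal{S}'_\infty$. Since $X_V\subseteq\ker(x^*)$ for every $x^*\in V$, the restriction map $R\colon X^*\to X_V^*$, $R(x^*)=x^*|_{X_V}$, annihilates $V$ and hence factors through the canonical surjection $q\colon X^*\to X^*/V$ as $R=\Phi\circ q$ for a unique linear map $\Phi\colon X^*/V\to X_V^*$. The whole theorem is the assertion that $\Phi$ is a topological isomorphism, so I would split the proof into bijectivity of $\Phi$, continuity of $\Phi$, and continuity of $\Phi^{-1}$.

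Bijectivity is soft. Surjectivity is the Hahn--Banach extension theorem: $X_V$ carries the subspace topology of the locally convex space $X$, so every $\xi\in X_V^*$ extends to some $\tilde\xi\in X^*$ with $\Phi([\tilde\xi])=\xi$. Injectivity says that $x^*|_{X_V}=0$ forces $x^*\in V$; but $X_V$ is, by definition, the annihilator of $V$ inside $X$, so this is exactly the identity $(X_V)^\perp=V$, i.e.\ the bipolar theorem applied to the weak-$*$ closed subspace $V$ (equivalently, Hahn--Banach separation in the dual pair $\langle X,X^*\rangle$). Continuity of $\Phi$ is also immediate: by the universal property of the quotient topology it suffices that $R$ be $\sigma(X^*,X)$-to-$\sigma(X_V^*,X_V)$ continuous, and for each $x\in X_V\subseteq X$ the functional $x^*\mapsto\langle x,R(x^*)\rangle=\langle x,x^*\rangle$ is $\sigma(X^*,X)$-continuous, such functionals generating $\sigma(X_V^*,X_V)$. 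In dual-pair language this already gives that the weak topology $\sigma(X^*/V,X_V)$ for the pairing $\langle[x^*],x\rangle=\langle x,x^*\rangle$ $(x\in X_V)$ is coarser than the quotient topology on $X^*/V$.

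The real work, and what I expect to be the main obstacle, is the reverse inclusion: the quotient topology on $X^*/V$ is no finer than $\sigma(X^*/V,X_V)$, equivalently $q\colon(X^*,\sigma(X^*,X))\to(X^*/V,\sigma(X^*/V,X_V))$ is an open map. I would argue as follows. Take a basic weak-$*$ neighbourhood of $0$, $N=\{x^*\in X^*:|\langle x_i,x^*\rangle|<\varepsilon,\ i=1,\dots,n\}$ with $x_1,\dots,x_n\in X$, and write $N=T^{-1}(B)$ for the finite-rank linear map $T=(\langle x_1,\cdot\rangle,\dots,\langle x_n,\cdot\rangle)\colon X^*\to\mathbb{K}^n$ and an open ball $B\subseteq\mathbb{K}^n$. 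Since $q^{-1}(q(N))=N+V$, one computes $q(N)=\{[x^*]:Tx^*\in B+T(V)\}$, which is the preimage under the induced linear map $\bar T\colon X^*/V\to\mathbb{K}^n/T(V)$ of the open set $\pi(B)$, where $\pi\colon\mathbb{K}^n\to\mathbb{K}^n/T(V)$ is the canonical surjection. Each coordinate functional of $\bar T$ is realised by pairing with an element of $X_V$: a linear functional on $\mathbb{K}^n$ vanishing on $T(V)$ has the form $\sum_i c_i\,\mathrm{pr}_i$, and $y:=\sum_i c_i x_i$ then lies in $X_V$ because $\langle y,v\rangle=\sum_i c_i\langle x_i,v\rangle=0$ for all $v\in V$. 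Hence $\bar T$ is $\sigma(X^*/V,X_V)$-continuous, so $q(N)=\bar T^{-1}(\pi(B))$ is $\sigma(X^*/V,X_V)$-open and contains $[0]$; as $N$ runs over a neighbourhood basis and $q$ is linear, $q$ is open. Therefore the quotient topology on $X^*/V$ is precisely $\sigma(X^*/V,X_V)$, which transports under the bijection $\Phi$ to $\sigma(X_V^*,X_V)$, and $\Phi$ is the asserted topological isomorphism.

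The subtle point in this last step is that the vectors $x_i$ defining $N$ need not belong to $X_V$, so one cannot simply verify the claim on subbasic neighbourhoods: only after saturating by $V$ (passing to the finite-dimensional quotient $\mathbb{K}^n/T(V)$) do the surviving linear constraints become expressible through elements of $X_V$. One could instead quote the corresponding textbook fact about quotients in dual pairs (as in Treves), but carrying out this finite-dimensional reduction directly is the natural way to slightly extend the argument of \cite{NNS15} and keeps the whole proof self-contained.
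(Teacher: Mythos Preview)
Your proof is correct and follows the same overall architecture as the paper's: factor the restriction $R$ through $X^*/V$, identify $\ker(R)=V$ via the bipolar theorem, obtain surjectivity by Hahn--Banach extension, and then establish openness. The divergence is in the openness step. The paper's argument (its Section~2.4) is considerably more laborious: it first proves a lemma stating that if $\{[x_1],\dots,[x_l]\}$ is linearly independent in $X/X_V$, then any $z^*\in X_V^*$ extends to some $x^*\in X^*$ with $\langle x^*,x_j\rangle=0$ for every $j$; the proof of that lemma introduces a Hamel-basis norm on $X$, runs a compactness argument on the sphere $S^{2l-1}$, and passes to a Banach completion before applying Hahn--Banach. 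A follow-up proposition then deduces openness from this lemma by reducing a general family $x_1,\dots,x_L$ to a maximal independent subfamily. Your finite-dimensional reduction---writing a basic weak-$*$ neighbourhood as $T^{-1}(B)$ for $T\colon X^*\to\mathbb{K}^n$, passing to $\mathbb{K}^n/T(V)$, and observing that every coordinate of the induced map $\bar T$ is given by pairing with an element of $X_V$---reaches the same conclusion with softer tools and no auxiliary norms or completions. What the paper's route buys is an explicit extension result with prescribed zeros (its Lemma~2.1), which is a little more than is needed here; what your route buys is brevity and a cleaner conceptual picture of why saturating by $V$ turns the constraints into $X_V$-constraints.
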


The proof of Theorem \ref{thm:main} is 
a combination of \cite[Propositions 35.5 and 35.6]{Treves67}
and the bipolar theorem.
\begin{theorem}{\rm\cite[Propositions 35.5 and 35.6]{Treves67}}
Let $X$ be a locally convex Hausdorff space,
and let $N$ be a closed linear subspace of $X$.
Then the kernel of the restriction $X'$ to $N'$
is 
\[
N^\circ=\bigcap_{n \in N}\{x^* \in X'\,:\,<x',n>=0\}.
\]
Furthermore, its quotient mapping
is a homeomorphism from
$X'/N^\circ$ to $N'$.
\end{theorem}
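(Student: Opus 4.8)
The plan is to study the restriction map $\rho\colon X' \to N'$, $\rho(x') \equiv x'|_N$, where (consistently with the surrounding results) $X'$ carries the weak-$*$ topology $\sigma(X',X)$, the space $N'$ carries $\sigma(N',N)$, and $X'/N^\circ$ carries the quotient topology. First I would check that $\rho$ is well defined and linear: since $N$ is given the topology induced by $X$, the restriction of a continuous functional remains continuous. The identity $\ker\rho = N^\circ$ is then immediate, because $\rho(x')=0$ means exactly $\langle x',n\rangle=0$ for all $n\in N$. Note also that $N^\circ=\bigcap_{n\in N}\ker\langle\,\cdot\,,n\rangle$ is an intersection of kernels of weak-$*$ continuous evaluations, hence automatically $\sigma(X',X)$-closed, so $X'/N^\circ$ is a Hausdorff locally convex space. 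The one non-formal ingredient of the algebraic part is the surjectivity of $\rho$: given $f\in N'$, its continuity furnishes a continuous seminorm $p$ on $X$ with $|f(n)|\le p(n)$ for $n\in N$ (the induced topology on $N$ being generated by the restrictions of a defining family $\{p_\lambda\}$ of seminorms on $X$), and the Hahn--Banach theorem extends $f$ to some $x'\in X'$ dominated by $p$. Thus $\rho$ is onto and the induced map $\bar\rho\colon X'/N^\circ \to N'$ is a linear bijection.

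For the topological part, continuity of $\bar\rho$ is routine: for each fixed $n\in N$ the map $x'\mapsto\langle\rho(x'),n\rangle=\langle x',n\rangle$ is $\sigma(X',X)$-continuous, so $\rho$ is weak-$*$-to-weak-$*$ continuous, and the universal property of the quotient upgrades this to continuity of $\bar\rho$. The real content is continuity of the inverse, which I would establish by matching neighbourhood bases explicitly. Fix a basic $\sigma(N',N)$-neighbourhood of the origin,
\[
U = \{\, f \in N' : |\langle f,n_i\rangle| < \varepsilon,\ i=1,\dots,k \,\},
\qquad n_1,\dots,n_k \in N,
\]
and set $V = \{\, x' \in X' : |\langle x',n_i\rangle| < \varepsilon,\ i=1,\dots,k \,\}$, a basic $\sigma(X',X)$-neighbourhood of the origin. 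The inclusion $\rho(V)\subseteq U$ is clear, and Hahn--Banach supplies the reverse inclusion: any $f\in U$ lifts to some $x'\in X'$ with $\rho(x')=f$, and this $x'$ automatically lies in $V$ since $\langle x',n_i\rangle=\langle f,n_i\rangle$. Hence $\rho(V)=U$ exactly.

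Since the canonical surjection $q\colon X'\to X'/N^\circ$ is open, $q(V)$ is an open neighbourhood of the origin, and $\bar\rho(q(V))=\rho(V)=U$; as $\bar\rho$ is bijective this says $\bar\rho^{-1}(U)=q(V)$ is open. Letting $U$ run over the basic neighbourhoods shows $\bar\rho^{-1}$ is continuous at the origin, hence everywhere by linearity, so $\bar\rho$ is the desired homeomorphism. I expect the main obstacle to be precisely the passage from the trivial inclusion to the exact equality $\rho(V)=U$: one must ensure that the \emph{same} constant $\varepsilon$ survives the extension, and it is the norm-controlled form of Hahn--Banach that delivers this, turning $\bar\rho$ into an open map rather than a merely continuous bijection.
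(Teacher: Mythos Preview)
Your argument for the kernel identification and for surjectivity is fine, and the continuity of $\bar\rho$ is handled correctly. The gap is in the final paragraph, where you claim to establish continuity of $\bar\rho^{-1}$. What you actually prove is that for each basic $\sigma(N',N)$-neighbourhood $U=\{f:|\langle f,n_i\rangle|<\varepsilon\}$ one has $\bar\rho^{-1}(U)=q(V)$ with $q(V)$ open; but ``$\bar\rho^{-1}(U)$ is open for every basic $U$'' is precisely the statement that $\bar\rho$ is continuous, which you had already shown. It does \emph{not} give continuity of the inverse map $\bar\rho^{-1}:N'\to X'/N^\circ$. For that you must start from an arbitrary neighbourhood $W$ of $0$ in $X'/N^\circ$, and these are of the form $q(V_{x_1,\dots,x_m})$ with $x_i\in X$ not necessarily in $N$. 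Nothing in your argument controls such $W$: the trick ``$\langle x',n_i\rangle=\langle f,n_i\rangle$'' only works because your test vectors were chosen in $N$, and you never verify that the sets $q(V_{n_1,\dots,n_k})$ with $n_i\in N$ form a neighbourhood base of $0$ in the quotient.

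The paper does not give its own proof of this particular statement --- it is quoted from Tr\`eves --- but the paper's Section~2.4 carries out exactly the missing step in the parallel setting of Theorem~1.1: one reorders so that $[x_1],\dots,[x_l]$ are linearly independent in $X/N$, writes $x_j=n_j+\sum_{i\le l}c_{ji}x_i$ for $j>l$ with $n_j\in N$, and uses Hahn--Banach to produce an extension of $f$ that vanishes on $x_1,\dots,x_l$; then $\langle x',x_j\rangle=\langle f,n_j\rangle$ for $j>l$, and one obtains $q(V_{x_1,\dots,x_m})\supseteq q(V_{n_{l+1},\dots,n_m,\delta})$ for suitable $\delta$. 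This decomposition is the genuine content of the openness assertion, and it is precisely what your last paragraph skips. Your closing remark that ``the norm-controlled form of Hahn--Banach'' is what makes the constant $\varepsilon$ survive is also off the mark: when the test vectors lie in $N$ the values are preserved automatically by any extension, norm-controlled or not; the real difficulty is extending while controlling values at points \emph{outside} $N$.
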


\begin{theorem}[Bipolar theorem, {\rm \cite[p. 126, Theorem]{Schaefer}}]
Let $X$ be a Hausdorff topological vector space.
Let $V$ be a closed subspace
of $X^*$ equipped with the weak-* topology .
Define
\[
{}^\circ V
\equiv
\bigcap_{v^* \in V}
\{x \in X\,:\,\langle v^*,x \rangle=0\}
=
\bigcap_{v^* \in V}
\ker(v^*).
\]
Then
\[
V=\bigcap_{x \in {}^\circ V}
\{v^* \in X^*\,:\,\langle v^*,x \rangle=0\}
(\equiv ({}^\circ V)^\circ).
\]
\end{theorem}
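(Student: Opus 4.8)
The plan is to establish the asserted identity $V=({}^\circ V)^\circ$ by proving the two inclusions separately: the inclusion $V\subseteq ({}^\circ V)^\circ$ is a formal consequence of the definitions, while the reverse inclusion is the real content and will rest on the Hahn--Banach separation theorem applied inside the locally convex space $(X^*,\sigma(X^*,X))$.

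For $V\subseteq ({}^\circ V)^\circ$ I would simply unwind the definitions: any $v^*\in V$ annihilates every element of ${}^\circ V$ by the very construction of ${}^\circ V$, hence $v^*\in ({}^\circ V)^\circ$. I would also record here that $({}^\circ V)^\circ=\bigcap_{x\in{}^\circ V}\ker(\langle\,\cdot\,,x\rangle)$ is an intersection of kernels of weak-$*$ continuous functionals on $X^*$, so it is itself a weak-$*$ closed linear subspace; this makes clear that only the reverse inclusion can fail.

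For $({}^\circ V)^\circ\subseteq V$ I would argue by contraposition. Suppose $w^*\in X^*\setminus V$. Since $V$ is weak-$*$ closed and convex and $\{w^*\}$ is compact convex, the Hahn--Banach separation theorem in $(X^*,\sigma(X^*,X))$ produces a weak-$*$ continuous linear functional $\Phi$ with $\sup_{v^*\in V}\operatorname{Re}\Phi(v^*)<\operatorname{Re}\Phi(w^*)$. The key structural input is that the topological dual of $(X^*,\sigma(X^*,X))$ is $X$ itself (via the pairing), so $\Phi=\langle\,\cdot\,,x\rangle$ for some $x\in X$. Because $V$ is a linear subspace, the displayed supremum is finite only when $\langle v^*,x\rangle=0$ for all $v^*\in V$; thus $x\in{}^\circ V$, while $\operatorname{Re}\langle w^*,x\rangle>\sup_{v^*\in V}\operatorname{Re}\langle v^*,x\rangle=0$ forces $\langle w^*,x\rangle\neq 0$, i.e. $w^*\notin ({}^\circ V)^\circ$. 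Combining the two inclusions yields $V=({}^\circ V)^\circ$.

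The main obstacle is not any computation but the invocation of the identification of the dual of the weak-$*$ topology with $X$ — equivalently, the recognition that the separating functional supplied by Hahn--Banach is evaluation at a genuine point of $X$; this is the standard fact underpinning every bipolar-type statement and is the only place where one truly uses that one is working with a \emph{dual} pairing rather than an abstract locally convex space (and where the Hausdorff hypothesis on $X$ enters, guaranteeing that the pairing separates points of $X$). Everything else is bookkeeping with the definitions of polar and pre-polar, together with the elementary remark that a linear functional bounded above on a subspace must vanish on it.
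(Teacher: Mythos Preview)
Your argument is correct and follows the same route as the paper's Section~\ref{s2.2} (which the author explicitly flags as ``essentially the bipolar theorem''): one inclusion is formal, and for the other one separates a point $w^*\notin V$ from $V$ by a weak-$*$ continuous functional, then identifies that functional with evaluation at some $x\in X$ lying in ${}^\circ V$. The only cosmetic difference is that you invoke the identification $(X^*,\sigma(X^*,X))^*\cong X$ as a black box, whereas the paper rederives it on the spot via the Helly theorem (writing the separating functional $\Phi^{**}$ as a finite combination $\sum a_jQ_{x_j}$ once one knows $\bigcap_j\ker Q_{x_j}\subset\ker\Phi^{**}$); the underlying mechanism is identical.
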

If we let
$V={\mathcal P}(\subset {\mathcal S}')$,
the linear space of all polynomials,
then we can show that
$V$ is a closed subspace of
${\mathcal S}'$.
One of the ways to check this is to use the Fourier transform.
In fact,
$f \in {\mathcal S}'$ belongs to ${\mathcal P}$
if and only if
the Fourier transform is supported in $\{0\}$.

The isomorphism
$X^*/V \to X_V^*$
is given as follows:
Let $R$ be the natural restriction mapping
$R:X^* \in f \mapsto f|X_V \in X_V^*$.
Denote by $\iota:X_V \to X$ the natural inclusion.
Then $\iota$ and $R$ are dual to each other
and $R$ is clearly continuous.

The aim of this paper is to give out
an alternative proof of Theorem \ref{thm:main}.
We organize this short note as follows:
We prove Theorem \ref{thm:main} in Section \ref{s2}.
After collecting some preliminary facts
in Section \ref{s2.1} we plan to prove Theorem \ref{thm:main}.
We shall show that
$\ker(R)=V$
in Section \ref{s2.2} which is essentially
the bipolar theorem,
that $R$ is surjective
in Section \ref{s2.3}
and that
$R$ is an open mapping
in Section \ref{s2.4}.

We compare Theorem \ref{thm:main}
with the existing results in Section \ref{s3}.
We devote Sections \ref{s3.1}, \ref{s3.2}, \ref{s3.3}
and \ref{s3.4}
to the application of Theorem \ref{thm:main}
to the spaces 
${\mathcal S}_\infty'$,
${\mathcal S}_m'$,
${\mathcal S}_{\rm e}'$
and
${\mathcal D}'$
respectively.
The definition of
${\mathcal S}_m'$ and
${\mathcal S}_{\rm e}'$ will be given in Sections \ref{s3.2}
and \ref{s3.3}, respectively.
For a topological space $Y$ and its dual $Y^*$,
we write
$<y^*,y> \equiv y^*(y)$
for the coupling of $y \in Y$ and $y^* \in Y^*$.

\section{Proof of Theorem \ref{thm:main}}
\label{s2}

\subsection{A reduction and preliminaries}
\label{s2.1}

Let $X$ be a locally convex space
whose topology is given by a family of semi-norms
$\{p_\lambda\}_{\lambda \in \Lambda}$.
Let us set
\[
{\mathcal P}
\equiv 
\left\{\sum_{\lambda \in \Lambda_0}
a_\lambda p_\lambda\,:\,
\Lambda_0\mbox{ is a finite subset of }\Lambda
\mbox{ and }
\{a_\lambda\}_{\lambda \in \Lambda_0}
\subset {\mathbb N}
\right\}.
\]
Let $O \subset X^*$
be an open set.
Then there exists $q \in {\mathcal P}$
such that
\[
\{x \in X\,:\,q(x)<1\}
\subset
O.
\]
Therefore by replacing
$\{p_\lambda\}_{\lambda \in \Lambda}$
with
${\mathcal P}$,
we can assume that 
for any open set $O$
there exists $\lambda(O) \in \Lambda$ such that
\begin{equation}\label{eq:160119-2}
\{x \in X\,:\,p_{\lambda(O)}(x)<1\}
\subset
O.
\end{equation}

We invoke the propositions
concerning the Hahn-Banach extension.
First, we recall the following fact:
\begin{proposition}[Geometric form, {\rm \cite[p. 46]{Schaefer}}]\label{prop:1}
Let $M$ be a linear subspace 
in a topological vector space $L$
and let $A$ be a non-empty convex, open subset
of $L$, not intersecting $M$.
Then there exists a closed hyperplane in $L$,
containing $M$ and not intersecting $A$.
\end{proposition}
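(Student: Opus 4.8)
The plan is to derive this geometric statement from the analytic (dominated extension) form of the Hahn--Banach theorem by means of a Minkowski gauge, which is the classical route. The first step is a reduction of the conclusion to the existence of a single functional. A closed hyperplane through the origin is exactly $\ker f$ for a nonzero \emph{continuous} linear functional $f$ on $L$; such a hyperplane contains $M$ iff $M \subseteq \ker f$, and it fails to meet $A$ iff $f$ has no zero on $A$. Since $A$ is convex, hence connected, and $f$ is continuous, $f(A)$ is an interval, so ``$f$ has no zero on $A$'' is equivalent to ``$f$ has constant nonzero sign on $A$.'' Thus it suffices to produce a continuous linear $f$ vanishing on $M$ and, say, negative throughout $A$. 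To that end I would introduce $W := A - M$ and record that $W = \bigcup_{m \in M}(A - m)$ is open, that $W = A + M$ is convex because $M = -M$, and that $0 \notin W$ precisely because $A \cap M = \emptyset$; since $0 \in M$ one also has $A \subseteq W$.

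Next I would manufacture the functional. Fix $w_0 \in W$ and set $V := W - w_0$, an open convex neighbourhood of $0$, and let $p$ be its Minkowski gauge $p(x) = \inf\{\lambda > 0 : x \in \lambda V\}$. The standard properties are that $p$ is sublinear, that $\{p < 1\} = V$, and that $p$ is continuous because $V$ is a neighbourhood of $0$. Since $0 \notin W$ we have $-w_0 \notin V$, hence $p(-w_0) \ge 1$. On the line $\mathbb{R} w_0$ define $f_0(t w_0) := -t$; for $t \ge 0$ this is nonpositive while $p \ge 0$, and for $t < 0$ positive homogeneity together with $p(-w_0) \ge 1$ gives $f_0 \le p$, so $f_0 \le p$ on all of $\mathbb{R} w_0$. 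I would then apply the analytic Hahn--Banach extension theorem to extend $f_0$ to a linear functional $f$ on $L$ with $f \le p$ everywhere.

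Finally I would verify the separation and conclude. The bound $f \le p$ on the neighbourhood $V$ keeps $f$ bounded near $0$, which yields continuity of $f$. For $w = w_0 + v \in W$ with $v \in V$ one computes $f(w) = -1 + f(v) \le -1 + p(v) < 0$, so $f < 0$ on all of $W$, and in particular on $A$; in turn $f \ne 0$. If $f(m) \ne 0$ for some $m \in M$, then for a suitable sign and large magnitude of $t$ the value $f(a - t m) = f(a) - t f(m)$ becomes positive even though $a - t m \in W$, a contradiction; hence $M \subseteq \ker f$. Then $H := \ker f$ is a closed hyperplane containing $M$ and disjoint from $A$ (for complex $L$ one first produces the real-linear functional and passes to $x \mapsto f(x) - i f(i x)$ in the usual way). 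I expect the main obstacle to be precisely the \emph{closedness} of the hyperplane, i.e. the continuity of $f$: this is where the openness of $A$ is indispensable, entering through the continuity of the gauge $p$; dropping openness still gives algebraic separation but no longer a closed hyperplane.
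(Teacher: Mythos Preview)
Your argument is the standard Minkowski-gauge derivation of the geometric Hahn--Banach theorem and is correct as written; the only minor remark is that you should make explicit that $f(w_0)=-1$ (you use it in the line $f(w)=-1+f(v)$), which of course follows from $f|_{\mathbb{R}w_0}=f_0$.

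There is, however, nothing to compare against: the paper does not prove this proposition. It is quoted verbatim from Schaefer's textbook as a preliminary fact (together with the Mazur theorem and an analytic corollary) and is then used as a black box in Section~\ref{s2.2} to separate $V$ from $\ker(R)$. Your proof is essentially the one found in Schaefer, so in that sense it matches the cited source; but the paper itself offers no argument of its own for this statement.
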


Next,
we recall the Mazur theorem.
\begin{proposition}[Analytic form, {\rm \cite[p. 108, Theorem 3]{Yoshida}}]\label{thm:H-B-1}
Let $X$ be a locally convex linear topological space,
and $M$ be a closed convex subset
of $X$ such that
$a \cdot m \in M$
whenever $|a| \le 1$ and $m \in M$.
Then for any $x_0 \in X \setminus M$
there exists a continuous linear functional
$f$ on $X$ such that 
$f(x_0)>1 \ge |f(x)|$
for all $x_0 \in M$.
\end{proposition}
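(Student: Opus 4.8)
The plan is to derive this classical fact (Mazur's separation theorem) from the geometric form just recalled as Proposition~\ref{prop:1}. I would argue with real scalars; the complex case then follows by the familiar device of first producing a real-linear continuous functional $u$ with the asserted inequality and setting $f(x)\equiv u(x)-i\,u(ix)$, using that $M$ is circled to transport the estimate and a unimodular rotation to normalise the value at $x_0$.

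First I would localise. Since $M$ is closed and $x_0\notin M$, local convexity furnishes a convex, circled, open neighbourhood $W$ of the origin with $(x_0+W)\cap M=\emptyset$. Putting $A\equiv x_0+W$, the set
\[
B\equiv A-M=\bigcup_{m\in M}(A-m)
\]
is convex (a difference of convex sets), open (a union of open sets), and does not contain $0$ precisely because $A\cap M=\emptyset$. I would then apply Proposition~\ref{prop:1} with ambient space $X$, the linear subspace $\{0\}$, and the convex open set $B$: it produces a closed hyperplane through the origin that misses $B$, i.e.\ the kernel of a non-zero continuous linear functional $f$ with $f\neq0$ on $B$. As $B$ is convex, hence connected, $f(B)$ is a connected subset of $\mathbb{R}\setminus\{0\}$ and so has constant sign; replacing $f$ by $-f$ if necessary, we obtain $f(x_0)+f(w)=f(x_0+w)>f(m)$ for all $w\in W$ and $m\in M$.

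It then remains to upgrade this to the strict bound in the statement, and this is the only step that needs care. Because $f$ is non-zero and continuous it is an open map onto $\mathbb{R}$, so $f(W)$ is a neighbourhood of $0$ and $\delta\equiv\sup_{w\in W}f(w)>0$; since $W$ is circled, $\inf_{w\in W}f(w)=-\delta$, and letting $w$ vary in the inequality above gives $f(m)\le f(x_0)-\delta$ for every $m\in M$. Using once more that $M$ is circled, $\sup_{m\in M}|f(m)|=\sup_{m\in M}f(m)\le f(x_0)-\delta$, which is \emph{strictly} less than $f(x_0)$. Choosing $t$ with $\sup_{m\in M}|f(m)|<t<f(x_0)$ and replacing $f$ by $t^{-1}f$ yields $f(x_0)>1\ge|f(x)|$ for all $x\in M$, as claimed. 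The point of friction is exactly this last upgrade: Proposition~\ref{prop:1} only delivers a separation inequality involving an infimum over $W$ that may fail to be attained, so strictness is not automatic; the positive buffer $\delta$---available precisely because $W$ is a genuine neighbourhood of the origin---is what supplies it, while the remaining manipulations are routine. (Alternatively one could build $f$ as a Hahn--Banach extension to $X$ of the functional $t x_0\mapsto t\,p(x_0)$ on $\mathbb{R}x_0$, with $p$ the Minkowski gauge of a suitable convex circled open neighbourhood of $0$ separating $x_0$ from $M$, but this rests on the same ideas.)
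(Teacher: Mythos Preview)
The paper gives no proof of this proposition; it is simply quoted from \cite[p.~108, Theorem~3]{Yoshida} as a preliminary fact, so there is no in-paper argument to compare your attempt against. Your derivation from Proposition~\ref{prop:1} is the standard and correct route to Mazur's theorem: separating $\{0\}$ from the open convex set $A-M$, fixing the sign via connectedness of $f(B)$, and extracting the strict gap $\delta>0$ from the balanced neighbourhood $W$ all work as you describe (note that $0\in M$ whenever $M\ne\emptyset$, by the balancedness hypothesis, which is what guarantees $f(x_0)\ge\delta>0$ before you rescale by $t$).
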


When $M$ is a linear subspace in the above,
$f(x)=0$
for all $x \in M$.
Thus, we can deduce the following well-known version:
\begin{proposition}[Analytic form]\label{thm:H-B}
Let $X$ be a topological vector space
and let $Y$ be a closed linear space.
Then for any continuous linear functional $\ell_Y$
and $x \in X \setminus Y$
there exists a continuous linear functional
$\ell_X$ such that $\ell_X|Y=\ell_Y$ and that
$\ell_X(x)=0$.
\end{proposition}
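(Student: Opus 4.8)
The plan is to obtain $\ell_X$ as a linear combination $\ell_X = m - c\, g$ of two continuous linear functionals on $X$: a functional $m$ that already extends $\ell_Y$, with no control on its value at $x$, together with a functional $g$ that vanishes on all of $Y$ but is nonzero at $x$, whose only role is to kill the value at $x$ without disturbing the restriction to $Y$. Both $m$ and $g$ will be produced from the observation recorded immediately after Proposition \ref{thm:H-B-1}: for a closed linear subspace $M \subset X$ and a point $x_0 \in X \setminus M$ there is a continuous linear functional on $X$ that vanishes on $M$ and is nonzero at $x_0$. (If $\ell_Y = 0$ the claim is trivial, with $\ell_X = 0$, so from now on I assume $\ell_Y \neq 0$.)

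For the functional $m$, I would first fix $y_0 \in Y$ with $\ell_Y(y_0) = 1$ and put $N \equiv \ker \ell_Y$. Since $\ell_Y$ is continuous on $Y$, the subspace $N$ is closed in $Y$; since $Y$ is closed in $X$ --- this is the one place the hypothesis on $Y$ is used --- the subspace $N$ is closed in $X$. Because $y_0 \notin N$, the observation above yields a continuous linear functional on $X$ vanishing on $N$ and nonzero at $y_0$; after rescaling we obtain $m$ with $m|N = 0$ and $m(y_0) = 1$. Then $m|Y = \ell_Y$: every $y \in Y$ splits as $y = (y - \ell_Y(y)\, y_0) + \ell_Y(y)\, y_0$ with $y - \ell_Y(y)\, y_0 \in N$, hence $m(y) = \ell_Y(y)$. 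For $g$, I would apply the same observation to the closed subspace $Y$ and the point $x \in X \setminus Y$, producing a continuous $g$ with $g|Y = 0$ and $g(x) \neq 0$.

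It then remains to set
\[
\ell_X \equiv m - \frac{m(x)}{g(x)}\, g ,
\]
which makes sense since $g(x) \neq 0$. This $\ell_X$ is continuous, being a linear combination of continuous functionals; for $y \in Y$ we get $\ell_X(y) = m(y) - \frac{m(x)}{g(x)}\, g(y) = \ell_Y(y) - 0 = \ell_Y(y)$, so that $\ell_X|Y = \ell_Y$; and $\ell_X(x) = m(x) - \frac{m(x)}{g(x)}\, g(x) = 0$, as required.

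I do not expect any genuine obstacle here; the work is entirely bookkeeping. The only points needing care are that each invocation of the observation following Proposition \ref{thm:H-B-1} really is applied to a \emph{closed} subspace together with a point genuinely outside it --- which is exactly what forces the closedness of $N$ and hence uses the closedness of $Y$ --- and that $y_0 \notin N$ and $x \notin Y$, so that the rescalings are legitimate. One could also compress the two applications into a single one, applied to the closed subspace generated by $N$ and $x$ (closed, as the sum of the closed subspace $N$ and a one-dimensional subspace) and to the point $y_0$, which lies outside it; I prefer the two-step version because it keeps the role of the correcting functional $g$ transparent.
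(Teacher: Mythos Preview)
Your proof is correct and is precisely the deduction the paper leaves implicit: the paper's entire argument is the one-line remark preceding the proposition (that Mazur's theorem applied to a closed linear subspace yields a continuous functional vanishing on it and nonzero at the external point), and you have spelled out how to pass from that separation fact to the full extension-with-prescribed-zero statement. Your two-step version and the compressed one-step version you sketch at the end are both valid realizations of the paper's ``Thus, we can deduce'' sentence.
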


\subsection{The kernel of $R$}
\label{s2.2}

We now specify $\ker(R)$.
It is easy to see that
$V \subset \ker(R)$ and
that $V$ and $\ker(R)$ are weak-* closed.
Assume that $V$ and $\ker(R)$ are different.
Then by Proposition \ref{prop:1},
we can find a continuous linear functional 
$\Phi^{**}:X^* \to {\mathbb C}$
such that
$V \subset \ker(\Phi^{**})$
and
\begin{equation}\label{eq:160119-1}
\ker(R) \cap \ker(\Phi^{**})^{\rm c} \ne \emptyset.
\end{equation}
Since $\Phi^{**}:X^* \to {\mathbb C}$ is continuous,
we have
\begin{eqnarray*}
&&\{x^* \in X^*\,:\,
|<x^*,x_1>|<1, 
|<x^*,x_2>|<1, 
\ldots, 
|<x^*,x_k>|<1
\}\\
&&\quad\subset
\{x^* \in X^*\,:\,|<\Phi^{**},x^*>|<1\}
\end{eqnarray*}
for some $x_1,x_2,\ldots,x_k \in X$.
This means that
\[
\bigcap_{j=1}^k \ker(Q_{x_j}) 
\subset
\ker(\Phi^{**}),
\]
where $Q:X \ni x \mapsto Q_x \in X^{**}$
is a natural inclusion.
By the Helly theorem,
we see that
\[
\Phi^{**}=\sum_{j=1}^k a_j Q_{x_j} \in X^{**}.
\]
Let $x^* \in V$ be arbitrary.
Then we have
\[
\left<x^*,\sum_{j=1}^k a_j x_j\right>
=
<\Phi^{**},x^*>=0,
\]
since $x^* \in \ker(\Phi^{**})$.
This means that
\[
\sum_{j=1}^k a_j x_j \in X_V.
\]
Let $x^* \in \ker(R)$.
Then
\[
<\Phi^{**},x^*>
=
<Q_{\sum_{j=1}^k a_j x_j},x^*>
=
\left<x^*,\sum_{j=1}^k a_j x_j\right>
=0,
\]
since $x^*|X_V=0$ and $\sum_{j=1}^k a_j x_j \in X_V$.
Thus,
$\ker(R) \subset \ker(\Phi^{**})$.
This contradicts (\ref{eq:160119-1}).

\subsection{The surjectivity of $R:X \to X_V$}
\label{s2.3}

Let $z^* \in X_V^*$.
Then
$|<z^*,z>| \le p_\lambda(z)$
for some $\lambda \in \Lambda$
by (\ref{eq:160119-2}).
Use the Hahn-Banach theorem
of analytic form to have a continuous linear
functional $x^*$ on $X$ which extends
$z^*$.
Then $R(x^*)=z^*$.

\subsection{The openness of $R$}
\label{s2.4}

We need the following lemma:
\begin{lemma}\label{lem:1}
Let
$z_1,z_2,\ldots,z_k \in X_V$
and
$x_1,x_2,\ldots,x_l \in X$.
Assume that the system
$\{[x_1],[x_2],\ldots,[x_l]\}$
is linearly independent 
in $X/X_V$.
Then for all
$z^* \in X_V$
satisfying
$|<z^*,z_j>|<1$
for all $j=1,2,\ldots,k$,
we can find
$x^* \in X_V$
so that
$z^*$ is a restriction of $x^*$
and that
$<x^*,x_j>=0$
for all $j=1,2,\ldots,l$.
\end{lemma}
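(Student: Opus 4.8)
The plan is to build $x^*$ by repeatedly invoking the refined Hahn–Banach extension of Proposition~\ref{thm:H-B}, prescribing the values of the functional on $X_V$ together with one of the points $x_j$ at each stage. Put $Y_0\equiv X_V$ and $Y_i\equiv X_V+\mathrm{span}\{x_1,\ldots,x_i\}$ for $i=1,\ldots,l-1$. Two elementary observations drive the construction. First, every $Y_i$ is closed: $X_V=\bigcap_{v^*\in V}\ker(v^*)$ is an intersection of kernels of continuous linear functionals, hence closed, and the sum of a closed subspace of a topological vector space with a finite-dimensional subspace is again closed. Second, the hypothesis that $\{[x_1],\ldots,[x_l]\}$ is linearly independent in $X/X_V$ is precisely the statement that $x_{i+1}\notin Y_i$ for every $i=0,1,\ldots,l-1$: were $x_{i+1}=w+\sum_{j\le i}c_j x_j$ with $w\in X_V$, then $[x_{i+1}]=\sum_{j\le i}c_j[x_j]$ in $X/X_V$, a contradiction.

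I would then argue by induction on $i$. When $i=1$, $z^*$ is a continuous linear functional on the closed subspace $Y_0=X_V$ and $x_1\in X\setminus Y_0$, so Proposition~\ref{thm:H-B} produces $\ell_1\in X^*$ with $\ell_1|X_V=z^*$ and $\langle\ell_1,x_1\rangle=0$. For the inductive step, assume $1\le i\le l-1$ and that $\ell_i\in X^*$ satisfies $\ell_i|X_V=z^*$ and $\langle\ell_i,x_j\rangle=0$ for $j=1,\ldots,i$. Applying Proposition~\ref{thm:H-B} to the closed subspace $Y_i$, the continuous functional $\ell_i|Y_i$, and the point $x_{i+1}\in X\setminus Y_i$ gives $\ell_{i+1}\in X^*$ restricting to $\ell_i$ on $Y_i$ — hence $\ell_{i+1}|X_V=z^*$ and $\langle\ell_{i+1},x_j\rangle=0$ for $j\le i$ — and satisfying $\langle\ell_{i+1},x_{i+1}\rangle=0$. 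After $l$ steps, $x^*\equiv\ell_l$ has $x^*|X_V=z^*$, so $z^*$ is a restriction of $x^*$, and $\langle x^*,x_j\rangle=0$ for all $j=1,\ldots,l$. The inequalities involving the $z_j$'s come for free: since $z_j\in X_V$ we have $\langle x^*,z_j\rangle=\langle z^*,z_j\rangle$, which has modulus $<1$ by hypothesis.

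One can also avoid the induction: since $X_V\cap\mathrm{span}\{x_1,\ldots,x_l\}=\{0\}$ with $X_V$ closed and $\mathrm{span}\{x_1,\ldots,x_l\}$ finite-dimensional, the algebraic decomposition $X_V\oplus\mathrm{span}\{x_1,\ldots,x_l\}$ is a topological direct sum, the functional that equals $z^*$ on $X_V$ and vanishes on each $x_j$ is then continuous on it (being $z^*$ composed with a continuous projection), and a single Hahn–Banach extension finishes the argument. In either approach the only point that is not mere bookkeeping is the topological input — $X_V$ closed, and closed-plus-finite-dimensional again closed — which is what makes Proposition~\ref{thm:H-B} (respectively, the continuous projection) available at every stage; the linear independence hypothesis enters only through the non-membership $x_{i+1}\notin Y_i$.
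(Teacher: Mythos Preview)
Your argument is correct. The only substantive inputs are that $X_V$ is closed (being an intersection of kernels of continuous functionals) and that in a Hausdorff topological vector space a closed subspace plus a finite-dimensional subspace is again closed; both hold here, so Proposition~\ref{thm:H-B} is available at each stage of the induction, and the direct-sum variant is equally valid since the projection of $X_V\oplus\mathrm{span}\{x_1,\ldots,x_l\}$ onto $X_V$ is continuous.

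The paper's proof reaches the same endpoint---an application of Proposition~\ref{thm:H-B}---but by a considerably longer route. It introduces an auxiliary Hamel-basis norm on $X$, then for each direction $(a_1,\ldots,a_l)\in S^{2l-1}$ selects a functional in $V$ witnessing $\sum a_j x_j\notin X_V$; compactness of the sphere reduces this to finitely many functionals, and the associated seminorms are added to the Hamel norm to produce a Banach-space completion $X_A$ in which the classes $[x_j]$ remain linearly independent modulo the closure $X_{A,V}$ of $X_V$. Only then is Hahn--Banach invoked, inside $X_A$. Your proof bypasses all of this: the linear independence of the $[x_j]$ in $X/X_V$ is already the hypothesis, $X_V$ is already closed in $X$, and Proposition~\ref{thm:H-B} applies in any locally convex space, so the Banach-space detour and the sphere-compactness argument are unnecessary. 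The paper's construction does make the separating functionals from $V$ explicit, but this plays no role in the subsequent Proposition, which simply cites the Lemma as a black box; your approach is therefore strictly more economical.
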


\begin{proof}
We know that any linear space $X$ has a norm
$\|\cdot\|_X$
although it is not necessarily compatible
with its original topological structure of $X$.
For example, choose a Hamel basis
$\{x_\theta\}_{\theta \in \Theta}$
and define
\[
\left\|\sum_{\theta \in \Theta_0}a_\theta x_\theta
\right\|_{X}
\equiv 
\sum_{\theta \in \Theta_0}|a_\theta|
\]
for any finite set $\Theta_0$.

Observe
$
\sum_{j=1}^l a_j x_j
\notin X_V
$
for any
$(a_1,a_2,\ldots,a_l) \ne (0,0,\ldots,0)$,
which yields
$x^*_{(a_1,a_2,\ldots,a_l)} \in X^*$
such that
\begin{equation}\label{eq:160131-1}
\left<
x^*_{(a_1,a_2,\ldots,a_l)},
\sum_{j=1}^l a_j x_j
\right>=1.
\end{equation}
Since
$x^*_{(a_1,a_2,\ldots,a_l)} \in X^*$
is a continuous linear functional,
we can find an index
$\lambda(a_1,a_2,\ldots,a_l) \in \Lambda$
such that
\begin{equation}\label{eq:160131-2}
\{x \in X\,:\,p_{\lambda(a_1,a_2,\ldots,a_l)}(x)<1\}
\subset
\{x \in X\,:\,|<x^*_{(a_1,a_2,\ldots,a_l)},x>|<1\}.
\end{equation}
Write
\begin{equation}\label{eq:160131-3}
U_{(a_1,a_2,\ldots,a_l)}
\equiv 
\left\{
(b_1,b_2,\ldots,b_l) \in {\mathbb C}^n\,:\,
p_{\lambda(a_1,a_2,\ldots,a_l)}
\left(\sum_{j=1}^l (a_j-b_j) x_j\right)<1\right\}.
\end{equation}
Since
$S^{2l-1}\equiv \{
(b_1,b_2,\ldots,b_l) \in {\mathbb C}^l\,:\,
|b_1|^2+|b_2|^2+\cdots+|b_l|^2=1\}$
is a compact set,
we can find a finite covering
$\{U_{(a_1,a_2,\ldots,a_l)}\}_{(a_1,a_2,\ldots,a_l) \in A}$
of $S^{2l-1}$,
where $A$ is a finite subset
of $S^{2l-1}$.

Let us denote by
$X_{A}$ and $X_{A,V}$
the completion 
of 
$X$ and $X_V$ with respect to the norm
\[
\|\cdot\|_{A}
\equiv 
\|\cdot\|_X+
\sum_{(a_1,a_2,\ldots,a_l) \in A}
p_{\lambda(a_1,a_2,\ldots,a_l)}(\cdot),
\]
respectively.
Then we can extend
$x^*_{(a_1,a_2,\ldots,a_l)}$
to a continuous linear functional
${\mathfrak X}^*_{(a_1,a_2,\ldots,a_l)}$
to $X_{A,V}$.

Note that
for any
$(b_1,b_2,\ldots,b_l) \in {\mathbb C}^n \setminus
(0,0,\ldots,0)$,
there exists
$(a_1,a_2,\ldots,a_l) \in A$
such that
\[
\frac{1}{\sqrt{|b_1|^2+|b_2|^2+\cdots+|b_l|^2}}
(b_1,b_2,\ldots,b_l)
\in U(a_1,a_2,\ldots,a_l).
\]
For such an element
$(a_1,a_2,\ldots,a_l) \in A$,
we deduce
\[
x^*_{\lambda(a_1,a_2,\ldots,a_l)}
\left(\sum_{j=1}^k a_j x_j\right) \ne 0
\]
from (\ref{eq:160131-1})--(\ref{eq:160131-3}).
Since
${\mathfrak X}^*_{\lambda(a_1,a_2,\ldots,a_l)}(x)=0$
for any $x \in X_{V,A}$,
we see that
$\{[x_1],[x_2],\ldots,[x_l]\}$
is a linearly independent system
in $X_A/X_{A,V}$.
Then use the Hahn-Banach theorem
of geometric form
(see Proposition \ref{thm:H-B}). Then
we obtain a continuous functional
${\mathfrak X}^*:X_A\to {\mathbb C}$
so that
${\mathfrak X}^*(x_j)=0$
for $j=1,2,\ldots,l$.
If we set ${\mathfrak X}^*|X=x^*$,
then we have the desired result.
\end{proof}

The following proposition shows that
$R$ is an open mapping:
\begin{proposition}
Let
$z_1,z_2,\ldots,z_k \in X_V$
and
$x_1,x_2,\ldots,x_L \in X$.
Then there exists $r>1$
depending only on 
$z_1,z_2,\ldots,z_k \in X_V$
and
$x_1,x_2,\ldots,x_L \in X$
such that
for all
$z^* \in X_V$
satisfying
$|<z^*,z_j>|<1$
for all $j=1,2,\ldots,k$,
we can find
$x^* \in X_V$
so that
$z^*$ is a restriction of $x^*$
and that
$|<x^*,x_j>|<r$
for all $j=1,2,\ldots,L$.
Namely, the range 
\[
\bigcap_{j=1}^k
\{x^* \in X^*\,:\,|<x^*,z_j>|<1\}
\cap
\bigcap_{j=1}^l
\{x^* \in X^*\,:\,|<x^*,x_j>|<r\}
\]
by $R$ contains
\[
\bigcap_{j=1}^k
\{x^* \in X^*\,:\,|<x^*,z_j>|<1\}.
\]
In particular, the range 
\[
\bigcap_{j=1}^k
\{x^* \in X^*\,:\,|<x^*,z_j>|<1\}
\cap
\bigcap_{j=1}^l
\{x^* \in X^*\,:\,|<x^*,x_j>|<1\}
\]
by $R$ contains
\[
\bigcap_{j=1}^k
\{x^* \in X^*\,:\,|<x^*,z_j>|<r^{-1}\}.
\]
\end{proposition}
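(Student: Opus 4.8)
The plan is to deduce the proposition from Lemma~\ref{lem:1} by a finite–dimensional linear–algebra reduction, and then to read off the ``in particular'' clause by homogeneity. Because the sets displayed in the statement run over bases of neighbourhoods of the origin in $X^*$ and in $X_V^*$ respectively, the resulting assertion is exactly that $R$ sends a neighbourhood base of $0$ into the neighbourhood filter of $0$, i.e.\ that $R$ is open, which is the point we are after.

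First I would reorganize $x_1,\dots,x_L$: after relabelling, let $[x_1],\dots,[x_m]$ be a maximal linearly independent subsystem of $\{[x_1],\dots,[x_L]\}$ in $X/X_V$, and for $i>m$ write $x_i=\sum_{j\le m}c_{ij}x_j+w_i$ with $w_i\in X_V$. The estimate below needs the $w_i$ to be controllable by the constraints on $z^*$, so I would assume (enlarging the finite family $z_1,\dots,z_k$ if necessary --- harmless when the proposition is used to establish openness of $R$, since there one is free to adapt the neighbourhood of $0$ in $X_V^*$) that $w_i=\sum_{t\le k}d_{it}z_t$ for $i>m$. Now fix $z^*\in X_V^*$ with $|\langle z^*,z_j\rangle|<1$ for all $j$. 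Lemma~\ref{lem:1}, applied with the data $z_1,\dots,z_k$ and the independent system $x_1,\dots,x_m$, produces $x^*\in X^*$ extending $z^*$ with $\langle x^*,x_j\rangle=0$ for $j\le m$. For $i>m$ this forces $\langle x^*,x_i\rangle=\langle x^*,w_i\rangle=\langle z^*,w_i\rangle=\sum_{t\le k}d_{it}\langle z^*,z_t\rangle$, whence $|\langle x^*,x_i\rangle|<\sum_{t\le k}|d_{it}|$. Therefore $r\equiv 2+\sum_{i>m}\sum_{t\le k}|d_{it}|$ exceeds $1$, depends only on the listed vectors, and satisfies $|\langle x^*,x_j\rangle|<r$ for all $j$ (the value being $0$ for $j\le m$); since $\langle x^*,z_j\rangle=\langle z^*,z_j\rangle$ we also have $|\langle x^*,z_j\rangle|<1$, so $x^*$ lies in the first displayed set and $R(x^*)=z^*$. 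This proves the ``Namely'' part, and the ``in particular'' part then follows by applying what has just been shown to $rz^*$ in place of $z^*$ and scaling the resulting functional by $r^{-1}$.

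The step I expect to be the real obstacle is the reduction in the second paragraph, not the invocation of Lemma~\ref{lem:1}: one must guarantee that the $X_V$-components $w_i$ of the dependent vectors are genuinely dominated by the constraints $|\langle z^*,z_j\rangle|<1$, and this is precisely what dictates how to choose the neighbourhood of $0$ in $X_V^*$ in terms of the neighbourhood of $0$ in $X^*$ one begins with. Once that bookkeeping is in place, the remaining estimates are just the triangle inequality, and the only nonelementary ingredient is Lemma~\ref{lem:1} itself, which in turn rests --- as in the surjectivity argument of Section~\ref{s2.3} --- on extending a continuous functional on $X_V$ to a continuous functional on $X$, here with the extra demand that it annihilate a prescribed finite independent family of directions.
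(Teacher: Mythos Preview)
Your approach mirrors the paper's: pick a maximal independent subsystem $[x_1],\dots,[x_m]$ in $X/X_V$, write each remaining $x_i$ as $\sum_{j\le m} c_{ij}x_j + w_i$ with $w_i\in X_V$, invoke Lemma~\ref{lem:1} to obtain an extension $x^*$ of $z^*$ annihilating $x_1,\dots,x_m$, and then bound $\langle x^*,x_i\rangle=\langle z^*,w_i\rangle$ for $i>m$.

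You have, however, put your finger on a genuine defect in the proposition as literally stated. The paper's own proof sets $r=1+\max_j|\langle z^*,\tilde z_j\rangle|$, which visibly depends on $z^*$ and so does not meet the announced requirement that $r$ depend only on the listed vectors; indeed, with $k=0$ and a single $x_1\in X_V\setminus\{0\}$ the claim is false as written, since any extension satisfies $\langle x^*,x_1\rangle=\langle z^*,x_1\rangle$, which is unbounded as $z^*$ ranges over $X_V^*$. Your remedy---adjoining the $w_i$ to the list $z_1,\dots,z_k$---is exactly what is needed for the intended application: when proving that $R$ is open one begins with a basic weak-$*$ neighbourhood of $0$ in $X^*$ determined by $x_1,\dots,x_L$ and is free to \emph{choose} the $z_t$ that define the target neighbourhood in $X_V^*$, so including the $w_i$ among them costs nothing and renders $r$ genuinely uniform in $z^*$. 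In short, your argument is the paper's argument together with the bookkeeping the paper glossed over; the concern you flag in your final paragraph is real, and your fix is the correct one.
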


\begin{proof}
Let us assume that
$\{[x_1],[x_2],\ldots,[x_l]\}$
is a maximal linearly independent family 
in $X/X_V$,
where $l \le L$.
Then for any 
$j \in(l,L] \cap {\mathbb N}$
and
$k \in[1,l] \cap {\mathbb N}$,
we can find
$\mu_{j k} \in {\mathbb C}$
and
$\tilde{z}_j \in X_V$
such that
\[
x_j=\tilde{z}_j+\sum_{m=1}^l \mu_{j m}x_m.
\]
Let 
\[
r\equiv 1+\max_{j=1,2,\ldots,l}|<z^*,\tilde{z}_j>|
\]
Note that
\[
|<z^*,z_j>|,
|<z^*,\tilde{z}_m>|<r
\]
for all 
$j=1,2,\ldots,k$
and
$m=l+1,l+2,\ldots,L$.
According to Lemma \ref{lem:1},
we can find
$x^* \in X^*$
so that $x^*$ is an extension of $z^*$
and that
\[
<z^*,x_j>=0, \quad j \in (l,L].
\]
Thus, we obtain the desired result.
\end{proof}

\section{Applications}
\label{s3}

\subsection{Schwartz space}
\label{s3.1}

The Schwartz space ${\mathcal S}$ is defined
to be the set of all $\Phi \in C^\infty$
for which the semi-norm $p_N(\Phi)$ is finite for all 
$N \in {\mathbb N}_0\equiv \{0,1,\ldots\}$,
where
\[
p_N(\Phi)
\equiv 
\sum_{|\alpha| \le N}
\sup_{x \in {\mathbb R}^n}
(1+|x|)^N|\partial^\alpha \Phi(x)|.
\]
The space ${\mathcal S}_\infty$ is the set of all
$\Phi \in {\mathcal S}$ for which 
\[
\int_{{\mathbb R}^n}x^\alpha\Phi(x)\,dx=0
\]
for all 
$\alpha \in {\mathbb N}_0^{\ n}$.

The topological dual of 
${\mathcal S}$
and 
${\mathcal S}_\infty$
are denoted by 
${\mathcal S}'$
and 
${\mathcal S}_\infty'$,
respectively.
The elements
in
${\mathcal S}'$
and 
${\mathcal S}_\infty'$
are called
Schwarz distributions
and 
Lizorkin distributions, 
respectively.
Equip 
${\mathcal S}'$
and 
${\mathcal S}_\infty'$
with the weak-* topology.

Since ${\mathcal S}_\infty$ is continuously embedded
into ${\mathcal S}$,
the dual operator $R$, called the restriction,
is continuous from ${\mathcal S}'$
to ${\mathcal S}'_\infty$.
We can generalize the following fact and refine the proof:
\begin{theorem}{\rm \cite[Theorem 6.18]{NNS15}}\label{thm:150301-1}
The restriction mapping 
$R:F \in {\mathcal S}' \mapsto F|{\mathcal S}_\infty'
\in {\mathcal S}'$ is open,
namely the image $R(U)$ is open in ${\mathcal S}_\infty'$
for any open set $U$ in ${\mathcal S}'$.
\end{theorem}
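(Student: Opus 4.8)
The plan is to derive this statement as an immediate consequence of Theorem \ref{thm:main}. First I would specialize the theorem to $X={\mathcal S}$, equipped with its usual topology given by the semi-norms $\{p_N\}_{N\in{\mathbb N}_0}$, and to $V={\mathcal P}\subset{\mathcal S}'$, the linear space of all polynomials. Two hypotheses then have to be checked. The first is that ${\mathcal P}$ is a weak-$*$ closed subspace of ${\mathcal S}'$. As already noted in the introduction, this is seen by transporting through the Fourier transform, which is a weak-$*$ homeomorphism of ${\mathcal S}'$ onto itself: the image of ${\mathcal P}$ is the space of tempered distributions supported in $\{0\}$, and this space is weak-$*$ closed, being the intersection over all $\varphi\in{\mathcal S}$ vanishing near the origin of the kernels of the weak-$*$ continuous functionals $T\mapsto\langle T,\varphi\rangle$ (the structure theorem for distributions supported at a point confirms that this intersection is exactly the set of finite linear combinations of derivatives of $\delta_0$, i.e.\ the Fourier image of ${\mathcal P}$).

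Second, I would identify the orthogonal space $X_V=\bigcap_{P\in{\mathcal P}}\ker(P)$ with ${\mathcal S}_\infty$. For $\Phi\in{\mathcal S}$ the coupling $\langle P,\Phi\rangle$ equals $\int_{{\mathbb R}^n}P(x)\Phi(x)\,dx$; writing an arbitrary polynomial as a linear combination of monomials shows that $\langle P,\Phi\rangle=0$ holds for every $P\in{\mathcal P}$ if and only if $\int_{{\mathbb R}^n}x^\alpha\Phi(x)\,dx=0$ for every $\alpha\in{\mathbb N}_0^n$, which is precisely the defining condition of ${\mathcal S}_\infty$. Since ${\mathcal S}_\infty$ carries, by definition, the topology induced from ${\mathcal S}$, this is an identification of topological vector spaces, $X_V={\mathcal S}_\infty$.

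Granting these two identifications, Theorem \ref{thm:main} yields that the map $\bar R\colon{\mathcal S}'/{\mathcal P}\to{\mathcal S}'_\infty$ induced by the restriction $R$ (recall $\ker R={\mathcal P}$ from Section \ref{s2.2} and $R$ surjective from Section \ref{s2.3}) is a homeomorphism when ${\mathcal S}'/{\mathcal P}$ carries the quotient topology. Now $R$ factors as ${\mathcal S}'\xrightarrow{\pi}{\mathcal S}'/{\mathcal P}\xrightarrow{\bar R}{\mathcal S}'_\infty$, where the canonical surjection $\pi$ is open for the quotient topology (if $U\subset{\mathcal S}'$ is open then $\pi^{-1}(\pi(U))=U+{\mathcal P}$ is open) and $\bar R$ is open because it is a homeomorphism; hence the composite $R$ is open, which is the assertion. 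Alternatively, the openness of $R$ can be read off directly from the Proposition of Section \ref{s2.4}, whose proof exhibits, for each basic weak-$*$ neighbourhood of the image, an explicit larger basic neighbourhood contained in it.

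I expect essentially all the difficulty to have been front-loaded into Theorem \ref{thm:main}: the Helly-theorem argument for $\ker R={\mathcal P}$, the Hahn--Banach extensions, and the compactness covering of $S^{2l-1}$ in Lemma \ref{lem:1} are where the real work sits. In this application the only genuine verifications are the two identifications above, and among them the mildly delicate point is the weak-$*$ closedness of ${\mathcal P}$ --- which is exactly why the brief detour through the Fourier transform is worthwhile.
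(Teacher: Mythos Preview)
Your proposal is correct and follows exactly the paper's intended route: specialize Theorem~\ref{thm:main} to $X=\mathcal{S}$, $V=\mathcal{P}$, identify $X_V=\mathcal{S}_\infty$, and read off the openness of $R$. The one point of divergence is that the paper stresses ``there is no need to use the Fourier transform'': since Theorem~\ref{thm:150301-1} asserts only openness, the Proposition of Section~\ref{s2.4} applies directly and nowhere uses that $V$ is weak-$*$ closed, so your Fourier verification of that hypothesis---while correct and already sketched in the introduction---is superfluous for this particular statement (it is needed only for the identification $\ker R=\mathcal{P}$, which Theorem~\ref{thm:150301-1} does not assert).
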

The statement can be found in \cite{Triebel1},
where Triebel applied this theorem
to the definition of homogeneous function spaces.
Note also that Holschneider
considered Theorem \ref{thm:150301-1}
in the context of wavelet analysis
in \cite[Theorem 24.0.4]{Holschneider95},
where he applied 
a general result
\cite[Propositions 35.5 and 35.6]{Treves67}
to this special setting.
We can find the proof of Theorem \ref{thm:150301-1}
in \cite[Proposition 8.1]{YSY10-2}.
But there is a gap in Step 4,
where the openness of $R$ is proved using the closed graph theorem.
It seems that the closed graph theorem is not applicable
to the space ${\mathcal S}'$.
Our proof reinforces Step 4 in the proof of
\cite[Proposition 8.1]{YSY10-2}.

According to the proof of Theorem \ref{thm:main},
there is no need to use the Fourier transform.

\subsection{The space ${\mathcal S}_m'$}
\label{s3.2}

We recall the definition 
of ${\mathcal S}'/{\mathcal P}_m$,
where ${\mathcal P}_m$ denotes the set of
all polynomials of degree less than or equal to $m$.
Following Bourdaud \cite{Bourdaud11},
we denote by ${\mathcal S}_m$
the orthogonal space of ${\mathcal P}_m$
in ${\mathcal S}$
and by ${\mathcal S}'_m$
its topological dual.
See
\cite{LSUYY12,YaYu10-2,YYZ12,ZYY14-1}
for applications to homogeneous function spaces
defined in \cite{YaYu08-2,YaYu10-2,YSY10-2}.

\subsection{The Hasumi space ${\mathcal S}_{\rm e}'$}
\label{s3.3}

Let
$N \in {\mathbb N}$ and $\alpha \in {\mathbb N}_0{}^n$.
Write temporarily
$\varphi_{(N;\alpha)}(x)\equiv e^{N|x|} \partial^\alpha \varphi(x)$
$(x \in {\mathbb R}^n)$
for $\varphi \in C^\infty$.
Define
${\mathcal S}_{\rm e}$
as follows:
\[
{\mathcal S}_{\rm e}\equiv
\bigcap_{N \in {\mathbb N}, \alpha \in {\mathbb N}_0{}^n}
\left\{\varphi \in C^\infty \, : \,
\varphi_{(N;\alpha)} \in L^\infty
\right\}.
\]
The topological dual is denoted by
${\mathcal S}_{\rm e}'$
is called the Hasumi space \cite{Hasumi61}.
An analogy to the spaces
${\mathcal S}'$
and
${\mathcal S}_m'$
is available.
We refer
to \cite{Rychkov01}
for function spaces contained 
in ${\mathcal S}_{\rm e}'$.

\subsection{The space ${\mathcal D}'$}
\label{s3.4}

A similar thing to Sections \ref{s3.1}--\ref{s3.3} applies 
to ${\mathcal D}'$.
If we define
\[
{\mathcal D}_m
\equiv
\left\{
\varphi \in {\mathcal D}\,:\,
\int_{{\mathbb R}^n}x^\alpha \varphi(x)\,dx=0
\mbox{ for all }\alpha \in {\mathbb N}_0{}^n
\mbox{ with }|\alpha| \le m
\right\},
\]
then we have
\[
{\mathcal D}_m' \sim {\mathcal D}'/{\mathcal P}_m.
\]

\section{Acknowledgement}

The author is thankful to Professor Kunio Yoshino
and the anonymous reviewer for their pointing
out the references
\cite{Holschneider95,Treves67}.

\end{document}